\newtheorem*{theorem}{Theorem}
\newtheorem*{lemma}{Lemma}
\newtheorem*{proposition}{Proposition}
\newtheorem*{claim}{Claim}
\title{Menshov's "adjustment theorem" with respect to general measures}
\author{Themis Mitsis}
\address{Department of Mathematics and Applied Mathematics, University of Crete, 70013 Heraklion, Greece}
\email{themis.mitsis@gmail.com}
\subjclass[2010]{47A20, 28A75}
\keywords{Menshov's theorem, uniformly convergent Fourier series, non-atomic Borel measure.}
\begin{document}
\begin{abstract}
We prove Menshov's theorem in the setting of  arbitrary Borel measures.
 \end{abstract}
\maketitle
\pagestyle{plain}
A classical theorem of Menshov asserts that a Lebesgue measurable real function on $[0,2\pi]$  can be modified on a set of arbitrarily small Lebesgue measure so that the resulting function  is continuous and has uniformly convergent Fourier series. Menshov's intricate proof appeared in \cite{menshov}. K\"orner gave a different proof in \cite{kor1}. A highly readable account of Menshov's original proof may be found in Bary \cite{bary}.

Note that if we replace the requirement that the modified function should have uniformly convergent Fourier series with the requirement that it should be merely continuous, then we get Lusin's theorem which easily  holds with any positive, finite Borel measure in place of Lebesgue measure. So it is natural to ask whether Menshov's theorem holds for arbitrary measures as well. The purpose of this paper is to answer this question in the affirmative. Namely we prove the following.

\begin{theorem}
 If $f:[0,2\pi]\to\mathbbm R$ is Borel measurable and $\mu$ is a positive, finite Borel measure on $[0,2\pi]$, then for every $\varepsilon>0$ there exists a continuous $g:[0,2\pi]\to\mathbbm R$ with uniformly convergent Fourier series so that $\mu(\{f\ne g\})<\varepsilon$.
\end{theorem}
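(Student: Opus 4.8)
The plan is to run Menshov's argument with the measure $\mu$ in place of arc-length measure $m$, the only essentially new input being the regularity of $\mu$ and the fact that $\mu$ has at most countably many atoms. First I would make the routine reductions. Since $\mu$ is finite, $\mu(\{|f|>M\})\to0$ as $M\to\infty$, so after modifying $f$ on such a set we may assume $f$ is bounded; and by Lusin's theorem for $\mu$ together with Tietze's extension theorem we may even assume $f$ continuous, should that be convenient. The core of Menshov's proof is an iteration: one repeatedly corrects the current function by a trigonometric polynomial $P$ that approximates it off a small exceptional set while keeping all its partial sums $S_nP$ bounded by an absolute constant, and then sums the corrections; the limit $g$ is continuous, has uniformly convergent Fourier series, and agrees with $f$ outside a union of the exceptional sets. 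This whole machine runs with essentially no change once the basic correction step can be performed with the exceptional set small \emph{for $\mu$} rather than for $m$.

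To arrange this I would use regularity of $\mu$ to replace, at each stage, the level sets of the function being corrected by finite unions of arcs up to arbitrarily small $\mu$-measure; this reduces the construction of each correcting polynomial to the case of indicators of arcs, and by linearity to a single arc $[a,b]$. For $\mathbbm{1}_{[a,b]}$ the classical explicit construction applies: a sufficiently high partial sum of the Fourier series of a continuous, piecewise-linear function equal to $1$ on $[a,b]$ and $0$ off a slightly larger arc (a trapezoid) is a trigonometric polynomial whose partial sums are bounded by a constant depending only on that function's variation and sup-norm (hence by an absolute constant), and which differs from $\mathbbm{1}_{[a,b]}$ only on two short arcs around $a$ and $b$. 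Since $\mu$ has only countably many atoms one may nudge $a$ and $b$ so that $\mu(\{a\})=\mu(\{b\})=0$, and then, letting the two short arcs shrink, their $\mu$-measure tends to $0$; so the exceptional set is a finite union of short arcs placed away from the atoms of $\mu$, of arbitrarily small $\mu$-measure.

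The step I expect to be the main obstacle is pushing this last observation through Menshov's \emph{full} construction rather than the toy single-arc case: the polynomials that actually occur are not of bounded variation, and one has to check both that their exceptional sets can be genuinely localized to finitely many short, freely placeable arcs and that the absolute bounds on the partial sums survive when those arcs are made short enough to be $\mu$-negligible — it is this interplay between Menshov's delicate control of the operators $S_n$ and the regularity of an arbitrary $\mu$ that carries the weight of the argument. I would also note that the tempting shortcut of transporting the problem to Lebesgue measure via the distribution function $x\mapsto\mu([0,x])$, applying the classical theorem, and transporting back, does not work: when $\mu$ has a continuous singular part this is a devil's-staircase change of variables, and composition with such singular substitutions need not preserve uniform convergence of Fourier series, so $\mu$ really has to be built into the construction.
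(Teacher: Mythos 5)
Your proposal correctly locates the difficulty but does not resolve it, and the route you sketch around it would fail. In Menshov's Lemma the exceptional set is not a few ``freely placeable'' short arcs that can be nudged away from atoms and shrunk until their $\mu$-measure is negligible: it is a union of $\sim q=r\nu$ \emph{equally spaced} intervals (endpoints in arithmetic progression) each of relative length $\sim 1/\nu$, and this rigid structure is forced by the construction. The correcting function $\psi$ must equal $\gamma$ on the large set $E$ and oscillate up to $2\nu|\gamma|$ on the complementary intervals precisely so that conditions (3) and (4) (smallness of $\int_0^\xi\psi$ and of the convolutions with the Dirichlet-type kernels $\sin j(t-x)/(t-x)$, uniformly in $j$ and $x$) hold; shrinking those intervals forces $\psi$ to be larger there and destroys (1) and (4), and moving them destroys the cancellation. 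So the exceptional set occupies a fixed Lebesgue proportion of the interval at prescribed positions, and the real problem --- which your last paragraph explicitly leaves open (``it is this interplay \dots that carries the weight of the argument'') --- is to show that its $\mu$-measure is also only a small proportion of $\mu(I)$, for an arbitrary non-atomic $\mu$. Your atom-avoidance/regularity argument gives no control here, since a non-atomic singular $\mu$ can concentrate essentially all of $\mu(I)$ on one prescribed union of equally spaced intervals for any \emph{fixed} number of subdivisions.

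The missing idea, which is the actual content of the paper, is an equidistribution statement for such ``M-sets'': if $\mu$ is non-atomic and $A_n\subset I$ is the union of $n$ equally spaced intervals of relative length $\tau$, then along a subsequence of $n$'s of density one (and one may stay inside $m\mathbbm N$) one has $\mu(A_n)\to\tau\mu(I)$. This is proved by normalizing $\mu$ on $I$, invoking Wiener's theorem for non-atomic measures to get $\widehat\nu(nk)\to0$ for all $k$ along a density-one set of $n$, and deducing that the pushforwards of $\nu$ under $x\mapsto e^{2\pi inx}$ converge weakly to normalized Lebesgue measure on the circle. One then runs Menshov's construction exactly as in the classical case but chooses the free integer parameters (the $r$ in the lemma and the number $\kappa$ of subintervals per step) along this good subsequence, so that the unmovable exceptional set automatically has small $\mu$-measure; the purely atomic part of $\mu$ is disposed of separately at the outset. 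Incidentally, your closing remark about the failure of the change-of-variables shortcut agrees with the paper's point of view, but the constructive replacement for it is the Wiener-theorem equidistribution step, which your proposal does not supply.
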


We first note that this is plainly true if $\mu$ is purely atomic, that is if $\mu=\sum_{n=1}^{+\infty}a_n\delta_{x_n}$, where $\delta_{x_n}$ are Dirac deltas and $a_n$ are positive numbers with $\sum_na_n<+\infty$. So it suffices to consider the case where $\mu$ is non-atomic, that is $\mu(\{x\})=0$ for all $x$.

Let us very briefly go through Menshov's proof to see what the difficulty is if we try to replace Lebesgue measure with an arbitrary measure. By Lusin's theorem we can assume that $f$ is continuous. Then we approximate $f$ with step functions. The heart of the proof is the following lemma on page 500 in \cite{bary}.
\begin{lemma}[Menshov's Lemma]
 Let $[c,d]\subset[0,2\pi]$, $\gamma\in\mathbbm R$, $\varepsilon>0$, and $\nu\in\mathbbm N$ with $\nu>8$. Then there exists a continuous piecewise linear function $\psi$ supported in $[c,d]$, a Borel set $E\subset[c,d]$, and an absolute constant $B>0$ so that
\begin{enumerate}
  \item $|\psi|\leq2\nu|\gamma|$.
  \item $\psi=\gamma$ in $E$.
  \item $\displaystyle\left|\int_0^\xi\psi\right|<\varepsilon$, for all $\xi\in[0,2\pi]$.
  \item $\displaystyle\left|\int_0^{2\pi}\psi(t)\frac{\sin j(t-x)}{t-x}\, dt\right|<B\nu|\gamma|$, for all $j\in\mathbbm N$, $x\in[0,2\pi]$.
\end{enumerate}
The set $E$ is constructed as follows. We take $r$ to be an integer so that $q=r\nu$ satisfies \[4|\gamma|(d-c)/q<\varepsilon\] and we let
\begin{itemize}
 \item $\delta=\frac{d-c}{qn}$.
 \item $c_s=c+\delta\nu s$, $s=0,1,\dots,q$, $c_0=c$, $c_q=d$.
 \item $a_s=c_s-\delta$.
 \item $a'=c+2\frac{d-c}{\nu}$, $b'=d-2\frac{d-c}{\nu}$.
\end{itemize}
Then
\[E=[a',b']\smallsetminus\bigcup_{s=2r+1}^{q-2r}[a_s,c_s].\]
The Lebesgue measure of $E$ is at least $(d-c)(1-\frac5\nu)$, and so for large $\nu$ it can be made as close to the measure of $[c,d]$ as we please.
 \end{lemma}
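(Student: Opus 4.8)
The plan is to build $\psi$ explicitly on the grid already fixed, and then to verify the four properties, the last of which is the real work. On each subinterval $I_s=[c_{s-1},c_s]$ (of length $\delta\nu=(d-c)/q$) I would set $\psi=\gamma$ on the flat part $[c_{s-1},a_s]$ and insert on the short interval $[a_s,c_s]$ (length $\delta$) a continuous, piecewise linear downward spike that returns to $\gamma$ at $c_s$ and whose integral exactly cancels the flat contribution, so that $\int_{I_s}\psi=0$. A symmetric triangular spike of depth $-(2\nu-1)\gamma$ does this, since the flat part contributes $\gamma\,\delta(\nu-1)$ and the spike contributes $\delta(\gamma-(2\nu-1)\gamma)/2=-\gamma\,\delta(\nu-1)$. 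Near the two ends of $[c,d]$ I would taper $\psi$ linearly from $0$ up to the pattern and back down to $0$, using the first $2r$ and last $2r$ subintervals as transition zones; this forces $\psi$ to be supported in $[c,d]$ and is precisely why $E$ is taken inside $[a',b']=[c_{2r},c_{q-2r}]$ with the spikes $[a_s,c_s]$ removed. With this definition property 2 holds by construction ($\psi=\gamma$ exactly off the spikes and off the tapers, i.e.\ on $E$), and property 1 holds since $|\psi|\le(2\nu-1)|\gamma|<2\nu|\gamma|$.

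For property 3 the design $\int_{I_s}\psi=0$ means the running integral $\int_0^\xi\psi$ returns to its starting value at every grid point $c_s$, so its oscillation is governed by what happens inside a single subinterval, which is of order $|\gamma|\,\delta\nu=|\gamma|(d-c)/q$; the factor $4$ in the choice $4|\gamma|(d-c)/q<\varepsilon$ provides the slack needed to conclude $|\int_0^\xi\psi|<\varepsilon$ for all $\xi$. Moreover $\int_c^d\psi$ is of the same small order, so the bound persists for $\xi>d$, where $\psi$ vanishes. The transition zones contribute the same order and are absorbed into this estimate.

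The heart of the matter, and the step I expect to fight with, is property 4: the spikes are as tall as $2\nu|\gamma|$, there are of order $q$ of them, and yet the oscillatory integral against $\frac{\sin j(t-x)}{t-x}$ must stay below an absolute multiple of $\nu|\gamma|$ uniformly in $j$ and $x$. My plan is to exploit two features of the kernel: first, $\int_J\frac{\sin j(t-x)}{t-x}\,dt$ is uniformly bounded over all intervals $J$, all $j$, and all $x$, since it is a difference of values of the sine integral $\mathrm{Si}$; second, the antiderivative $F(t)=\mathrm{Si}(j(t-x))$ is bounded and is nearly constant once $|t-x|\gtrsim 1/j$, with a transition of size $O(1)$ concentrated in the window $|t-x|\lesssim 1/j$. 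I would then write $\int_0^{2\pi}\psi\,\frac{\sin j(t-x)}{t-x}\,dt=\sum_s\int_{I_s}$ and split the sum by the distance of $I_s$ from $x$ measured against the scale $1/j$. For subintervals with $\mathrm{dist}(x,I_s)\ge 1/j$ I use $\int_{I_s}\psi=0$ to replace the kernel by its deviation from a constant on $I_s$; combined with the near-constancy of $F$, this tames the tall spikes (the cancellation $\int_{I_s}\psi=0$ is fully felt where $F$ barely moves) and makes the far contributions sum to at most an absolute multiple of $|\gamma|$. For the $O(1)$ subintervals within $1/j$ of $x$ I integrate only over the window of length $\sim 1/j$ and bound crudely, using $|\psi|\le 2\nu|\gamma|$ against the pointwise bound $j$ for the kernel; this produces the single genuinely large term, of size $O(\nu|\gamma|)$. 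This is exactly where the factor $\nu$ in the bound is born and where the absolute constant $B$ is assembled from the $\mathrm{Si}$-bound and the convergent tails $\sum k^{-2}$.

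The main obstacle is keeping the far-field sum free of a spurious logarithm in the number of subintervals: a naive use of mean-zero together with a first-order bound on the kernel's \emph{derivative} leaves, besides a harmless $\sum k^{-2}$ tail, a dangerous $\sum k^{-1}$ term scaled by $j$, which would ruin uniformity in $j$. The resolution is to work with the antiderivative $F$ rather than $g_x'$, so that the oscillation of $\sin j(t-x)$ is \emph{integrated} rather than differentiated and the surviving factor decays like $1/(j\,|t-x|)$; this upgrades the borderline harmonic sum to an absolutely convergent one and yields the clean bound $B\nu|\gamma|$ uniformly in $j$ and $x$. The endpoint tapers are handled by the same dichotomy, completing the verification of all four properties.
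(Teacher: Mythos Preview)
The paper does not prove Menshov's Lemma at all: it is quoted from Bary's book (page~500) as a black box, with the explicit description of $E$ recorded only because the subsequent argument needs to recognise $[a',b']\smallsetminus E$ as an $M$-set. So there is no ``paper's proof'' to compare against; you are attempting to supply what the paper merely cites.

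Your construction of $\psi$ and your treatment of (1)--(3) are essentially Menshov's: the zero-mean design on each $I_s$ is exactly the mechanism behind (3), and the height bound (1) is immediate from the spike depth $(2\nu-1)|\gamma|$.

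The gap is in (4). Your proposed resolution --- integrate by parts against $F(t)=\mathrm{Si}(j(t-x))$ and use $|F-\tfrac{\pi}{2}\,\mathrm{sgn}|\lesssim 1/(j|t-x|)$ together with $\int_{I_s}\psi'=0$ --- does \emph{not} ``upgrade the borderline harmonic sum to an absolutely convergent one'' as you assert. The per-interval bound you obtain is
\[
\Bigl|\int_{I_s}\psi'(F-F_s)\Bigr|\ \lesssim\ \|\psi'\|_{L^1(I_s)}\cdot\frac{1}{j\,d_s}\ \asymp\ \frac{\nu|\gamma|}{j\,d_s},
\]
and since the $d_s$ step by $h=(d-c)/q$, summing over $s$ yields $\dfrac{\nu|\gamma|}{jh}\sum_k k^{-1}$, which is still harmonic; the extra $1/j$ in the numerator does not change the order of decay in $k$. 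Equivalently, $|F-\tfrac{\pi}{2}\mathrm{sgn}|$ decays like $|t-x|^{-1}$ in the spatial variable, exactly the same rate that produced the logarithm in the first place. Menshov's actual argument for (4), as presented in Bary, is more delicate: it exploits the specific odd structure of $\psi'$ on each spike (giving a second-difference effect) together with a careful case analysis on the relative sizes of $j$, $\delta$, and $|t-x|$, rather than a single integration by parts. Your outline is headed in the right direction, but as written it stops one genuine cancellation short of closing the estimate.
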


By means of Menshov's lemma, a step function is modified on a set of small measure to get a suitable continuous and piecewise linear function. Combining such functions we can complete the proof. The "small" set is a finite union of sets like the complement of the set $E$ in Menshov's lemma. This set is a finite union of intervals whose endpoints are arithmetic progressions and, of course, its Lebesgue measure can be trivially calculated by just adding up lengths. However, it is not obvious how to estimate its $\mu$-measure. Surprisingly, the following result shows that such a set inside an interval, asymptotically occupies the same proportion of the interval's $\mu$-measure as its Lebesgue measure.

\begin{proposition}
Let $\mu$ be a positive, finite, non-atomic Borel measure on $[0,2\pi]$, $I=[a,b]\subset[0,2\pi]$, $n\in\mathbbm N$ and $\sigma,\tau$ positive numbers so that $\sigma+\tau<1$.
Define
\[A_n=\bigcup_{k=0}^{n-1}\left[a+(k+\sigma)\frac{b-a}n,a+(k+\sigma+\tau)\frac{b-a}n\right].\]
Such a set is called an M-set. Then for every $m\in\mathbbm N$ there exists a subset $\widetilde\Lambda\subset m\mathbbm N$ such that
\[\lim_{n\in\widetilde\Lambda}\mu(A_n)=\tau\mu(I).\]
\end{proposition}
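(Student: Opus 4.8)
\emph{Proof plan.} The idea is to view $\mathbbm 1_{A_n}$ as a dilate of one fixed periodic indicator and then run a Fourier/equidistribution argument against $\mu$, extracting the subsequence from Wiener's theorem. Let $\phi:\mathbbm R\to\mathbbm R$ be the $1$-periodic function equal to $\mathbbm 1_{[\sigma,\sigma+\tau]}$ on $[0,1)$; since $\sigma+\tau<1$ we have $\int_0^1\phi=\tau$. A point $x\in I$ lies in $A_n$ exactly when $n\frac{x-a}{b-a}\bmod 1\in[\sigma,\sigma+\tau]$, so $\mathbbm 1_{A_n}(x)=\phi\!\left(n\frac{x-a}{b-a}\right)$ for every $x\in I$ outside the finite set of endpoints of the constituent intervals, a set which is $\mu$-null because $\mu$ is non-atomic. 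Pushing $\mu|_I$ forward by $x\mapsto\frac{x-a}{b-a}$ produces a positive, finite, \emph{non-atomic} measure $\nu$ on $\mathbbm T=\mathbbm R/\mathbbm Z$ with $\nu(\mathbbm T)=\mu(I)$, and, writing $\widehat\nu(k)=\int_{\mathbbm T}e^{-2\pi iky}\,d\nu(y)$ (so $\widehat\nu(0)=\mu(I)$),
\[\mu(A_n)=\int_{\mathbbm T}\phi(ny)\,d\nu(y).\]

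Since $\phi$ is merely an indicator, I would sandwich it by trigonometric polynomials. For each $\eta>0$ there exist trigonometric polynomials $P_\pm^\eta$, of some degree $d(\eta)$, with $P_-^\eta\le\phi\le P_+^\eta$ on $\mathbbm R$ and $\int_0^1(P_+^\eta-P_-^\eta)\le\eta$: take continuous $1$-periodic functions squeezing $\phi$ whose integrals are within $\eta/4$ of $\tau$, approximate each uniformly to within $\eta/4$ by a trigonometric polynomial (Fej\'er), and shift by $\eta/4$; in particular $|\widehat{P_\pm^\eta}(0)-\tau|\le\eta$. Expanding $P_\pm^\eta$ and integrating its dilate $P_\pm^\eta(ny)$ against $\nu$ gives
\[\int_{\mathbbm T}P_\pm^\eta(ny)\,d\nu(y)=\widehat{P_\pm^\eta}(0)\,\mu(I)+\sum_{1\le|j|\le d(\eta)}\widehat{P_\pm^\eta}(j)\,\widehat\nu(-jn),\]
and, using $|\widehat\nu(-jn)|=|\widehat\nu(jn)|$ together with the sandwich $P_-^\eta\le\phi\le P_+^\eta$,
\[|\mu(A_n)-\tau\mu(I)|\le\eta\,\mu(I)+\max_{\pm}\ \sum_{1\le|j|\le d(\eta)}|\widehat{P_\pm^\eta}(j)|\,|\widehat\nu(jn)|.\]

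The core of the argument is then the claim: \emph{there is an infinite set $\widetilde\Lambda\subset m\mathbbm N$ with $\lim_{n\in\widetilde\Lambda}\widehat\nu(jn)=0$ for every $j\in\mathbbm N$.} Granting it, the last display shows $\limsup_{n\in\widetilde\Lambda}|\mu(A_n)-\tau\mu(I)|\le\eta\,\mu(I)$ for every $\eta>0$ (for fixed $\eta$ the finite sum over $1\le|j|\le d(\eta)$ tends to $0$ along $\widetilde\Lambda$), and letting $\eta\to0$ finishes the proof. To prove the claim I would invoke Wiener's theorem: as $\nu$ is non-atomic, $\frac1N\sum_{k=1}^N|\widehat\nu(k)|^2\to0$, hence by Chebyshev $\{k:|\widehat\nu(k)|\ge\delta\}$ has density $0$ for every $\delta>0$. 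Consequently, for fixed $j$ and $\delta$, the set $\{n:|\widehat\nu(jn)|<\delta\}$ has density $1$, since its complement maps injectively, via $n\mapsto jn$, into the density-zero set $\{k:|\widehat\nu(k)|\ge\delta\}$. A finite intersection of such sets, intersected further with $m\mathbbm N$, still has lower density at least $1/m>0$, so it is infinite. Build $\widetilde\Lambda=\{n_1<n_2<\cdots\}$ inductively: having chosen $n_1,\dots,n_{\ell-1}$, pick $n_\ell\in m\mathbbm N$ with $n_\ell>n_{\ell-1}$ and $|\widehat\nu(jn_\ell)|<1/\ell$ for all $1\le j\le\ell$; then for each fixed $j$, $|\widehat\nu(jn_\ell)|<1/\ell$ once $\ell\ge j$, proving the claim.

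The step I expect to be the main obstacle is precisely this claim: a single finite measure need not be a Rajchman measure, so $\widehat\nu(k)$ itself need not tend to $0$, and one must genuinely use non-atomicity—through Wiener's $L^2$-averaging identity—to extract \emph{one} subsequence along which all dilates $\widehat\nu(jn)$ vanish simultaneously, while also forcing that subsequence to sit inside $m\mathbbm N$. The density bookkeeping above is what reconciles the simultaneous-vanishing requirement with the constraint $\widetilde\Lambda\subset m\mathbbm N$; everything else is soft (the periodic-indicator reformulation and the polynomial sandwich).
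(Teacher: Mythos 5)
Your proposal is correct and follows essentially the same route as the paper: rewrite $\mathbbm 1_{A_n}$ as a dilated $1$-periodic indicator integrated against the pushforward measure $\nu$ on $\mathbbm T$, use Wiener's theorem plus a density count to produce one subsequence inside $m\mathbbm N$ along which $\widehat\nu(jn)\to0$ for every $j$, and then upgrade from exponentials to the indicator of the arc. The only difference is the final upgrade step --- you use a trigonometric-polynomial sandwich (Weyl/Erd\H{o}s--Tur\'an style), while the paper passes through weak convergence of the distributions $P_n$ to normalized Lebesgue measure and the portmanteau criterion --- a cosmetic variation (your constants in the sandwich come out as $3\eta/2$ rather than $\eta$, which is immaterial), so the argument stands as written.
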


\begin{proof}
For any real number $x$ let $(x)=x-[x]$, where $[\cdot]$ is the integer part. Note that if $\ell$ is the affine map which takes $[0,1]$ onto $I$, then 
\[A_n=\ell(B_n)\cap I,\]
where \[B_n=\left\{x:(nx)\in[\sigma,\sigma+\tau]\right\}.\]
We normalize $\mu$ by defining the measure
\[\nu(E)=\frac{\mu(\ell(E)\cap I)}{\mu(I)}.\]
If 
\[\widehat{\nu}(j)=\int e^{-2\pi ijt}\, d\mu(t),\ j\in\mathbbm Z,\]
then by Wiener's theorem, since $\nu$ is non-atomic, we get that
\[\lim_{N\to+\infty}\frac1{N+1}\sum_{n=0}^{N}\left|\, \widehat{\nu}(nk)\right|^2=0,\]
for every $k$. Now let
\[\Lambda_{j,k}=\left\{n:\left|\,\widehat{\nu}(nk)\right|\leq\frac1j\right\}.\]
Then 
\[\frac{\left|\Lambda_{j,k}^\complement\cap[0,N]\right|}{N+1}\leq\frac{j^2}{N+1}\sum_{n=0}^N\left|\,\widehat{\nu}(nk)\right|^2.\]
Cosequently, $\Lambda_{j,k}$ has density $1$, therefore there exists a set $\Lambda_k$ with density $1$ so that $\Lambda_k\smallsetminus\Lambda_{j,k}$ is finite. Hence
\[\lim_{n\in\Lambda_k}\widehat\nu(nk)=0,\]
for all $k$. We conclude that there is a set $\Lambda$ with density $1$ so that
\[\lim_{n\in\Lambda}\widehat\nu(nk)=0,\]
for all $k$. Now if $\mathbbm T=\{z\in\mathbbm C:|z|=1\}$ is the unit circle, we define $F_n:\mathbbm R\to\mathbbm T$ by
\[F_n(x)=e^{2\pi inx}.\]
Then 
\[\lim_{n\in\Lambda}\int F_n^k(x)\, d\nu(x)=\lim_{n\in\Lambda}\widehat\nu(nk)=0.\]
for all $k$. Next, let $\lambda$ be normalized Lebesgue measure on $\mathbbm T$, and for every Borel set $E\subset\mathbbm T$, let 
\[P_n(E)=\nu(F_n^{-1}(E))\]
be the distribution of $F_n$. But then
\[\int F_n^k\, d\nu(x)=\int_{\mathbbm T}z^k\, dP_n(z),\]
so 
\[\lim_{n\in\Lambda}\int_{\mathbbm T}z^k\, dP_n(z)=0,\]
for all $k$, and therefore
\[\lim_{n\in\Lambda}\int_{\mathbbm T}\varphi(z)\, dP_n(z)=0,\]
for every trigonometric polynomial $\varphi$. We conclude that
\[\lim_{n\in\Lambda}\int_{\mathbbm T}f(z)\, dP_n(z)=\int_{\mathbbm T}f(z)\, d\lambda(z),\]
for every continuous function $f$ on $\mathbbm T$. Consequently
\[\lim_{n\in\Lambda}P_n(E)=\lambda(E)\]
for every $E$ whose boundary has Lebesgue measure zero. In particular, if
\[E=\left\{e^{2\pi it}:t\in[\sigma,\sigma+\tau]\right\},\]
we obtain
\[\nu(B_n)=P_n(E)\underset{n\in\Lambda}{\longrightarrow}\lambda(E)=\tau.\]
To finish the proof, we let $\widetilde\Lambda=\Lambda\cap m\mathbbm N$.
\end{proof}
We are now in a position to prove Menshov's theorem for the measure $\mu$. A careful reading of the argument on pages 506-507 in \cite{bary} shows that it is enough to prove the following.

\begin{claim}
Let $\varphi:[0,2\pi]\to\mathbbm R$ be a step function, $\nu\in\mathbbm N$ with $\nu>8$, and $\varepsilon_k$ a positive sequence. Then there exist 
\begin{itemize}
\item a finite partition of $[0,2\pi]$ into disjoint intervals $J_k=[\hat c_k,\hat d_k]$,  so that $\varphi$ is constant on $J_k$ and $\varphi|_{J_k}=\gamma_k$, 
\item continuous piecwise linear functions $\psi_k$ supported in $J_k$,
\item a Borel set $E\subset[0,2\pi]$ with $\mu(E)\geq(1-7/\nu)\mu([0,2\pi])$ 
\end{itemize}
such that
\begin{enumerate}
  \item $|\psi_k|\leq2\nu|\gamma_k|$, for all $k$.
  \item $\psi_k=\gamma_k$ in $E$.
  \item $\displaystyle\left|\int_0^\xi\psi_k\right|<\varepsilon_k$, for all $\xi\in[0,2\pi]$ and all $k$.
  \item $\displaystyle\left|\int_0^{2\pi}\psi_k(t)\frac{\sin j(t-x)}{t-x}\, dt\right|<B\nu|\gamma_k|$, for all $j\in\mathbbm N$, $x\in[0,2\pi]$ and all $k$.
\end{enumerate}
\end{claim}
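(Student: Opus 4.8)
The plan is to derive the Claim from Menshov's Lemma applied separately on each interval of a carefully chosen partition, and to use the Proposition to upgrade the \emph{Lebesgue} estimate on the exceptional set of the Lemma to a $\mu$-estimate. The decisive step is the choice of partition. I would fix a large integer $n$, to be selected at the very end from a set of admissible values of density one, and cut $[0,2\pi]$ into the $n$ equal intervals $[2\pi(i-1)/n,2\pi i/n]$. Since $\varphi$ has only finitely many, say $L$, jumps, at most $L$ of these intervals contain a jump in their interior; subdivide each such \emph{exceptional} interval at its jump points, so that on every interval of the resulting partition $\{J_k\}$ the function $\varphi$ is constant. The equal intervals that are not exceptional I call \emph{good cells}. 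On a sub-piece of an exceptional interval I take $\psi_k\equiv0$, which trivially satisfies (1)--(4) whatever the value $\gamma_k$, and I keep such pieces out of $E$. On a good cell $J_k$ with $\gamma_k=0$ I also take $\psi_k\equiv0$ and put all of $J_k$ into $E$. On a good cell $J_k=[\hat c_k,\hat d_k]$ with $\gamma_k\ne0$ I apply Menshov's Lemma with the given $\nu$, with $\gamma=\gamma_k$ and with $\varepsilon=\varepsilon_k$, obtaining $\psi_k$ supported in $J_k$ satisfying (1)--(4) together with a Borel set $E_k\subset J_k$ on which $\psi_k=\gamma_k$; finally I let $E$ be the union of all these $E_k$ together with the good zero-cells. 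Since the $\psi_k$ have pairwise disjoint supports, properties (1), (3) and (4) for the whole family follow at once from their one-cell versions, and (2) holds because $\psi_k=\gamma_k$ on $E\cap J_k$ in every case. Everything therefore reduces to the bound $\mu([0,2\pi]\smallsetminus E)\le(7/\nu)\mu([0,2\pi])$.

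To estimate the left-hand side I use the explicit description of $E_k$ in Menshov's Lemma. Inside a good cell $J_k$ the complement $J_k\smallsetminus E_k$ consists of two \emph{end intervals}, occupying the fixed relative positions $[0,2/\nu]$ and $[1-2/\nu,1]$ of $J_k$, together with the \emph{teeth} $\bigcup_s[a_s,c_s]$, which lie in the middle portion $[a'_k,b'_k]$ (of relative length $1-4/\nu$), are equally spaced there, and each occupy the rightmost $1/\nu$ of their sub-cell. Here is the reason for insisting on equal cells: since $\hat c_k=2\pi(i-1)/n$ and $|J_k|=2\pi/n$ for the good cells, the union over all good cells of the left end intervals is, apart from the at most $L$ absent exceptional cells, precisely the M-set over $[0,2\pi]$ with $\sigma=0$, $\tau=2/\nu$, and the union of the right end intervals is contained in the M-set over $[0,2\pi]$ with $\sigma=1-2/\nu$, $\tau=2/\nu$. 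The Proposition applies to both of these --- the endpoint cases $\sigma=0$ and $\sigma+\tau=1$ causing no difficulty, since the corresponding arcs of $\mathbbm T$ still have two-point, hence null, boundary, and its proof produces one density-one index set serving every arc. Hence, along a set of integers $n$ of density one, $\mu$ of the union of all end intervals tends to $(4/\nu)\mu([0,2\pi])$.

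For the teeth I would invoke the Proposition a second time, now \emph{inside} each good cell. Menshov's Lemma leaves the integer $r$ (hence $q=r\nu$, hence the number $r(\nu-4)$ of teeth) free, subject only to a lower bound on $q$; for a fixed good cell $J_k$ the teeth are exactly the M-set $A_{r(\nu-4)}$ over $[a'_k,b'_k]$ with $\tau=1/\nu$, so the Proposition (with $m=\nu-4$) gives a progression of admissible values of $r$ along which $\mu(\text{teeth in }J_k)\to(1/\nu)\mu([a'_k,b'_k])\le(1/\nu)\mu(J_k)$. Now fix $n$ in the density-one set of the previous paragraph and a small tolerance $\eta>0$, and for each good cell $J_k$ choose $r_k$ so large that: (i) $q_k=r_k\nu>4|\gamma_k||J_k|/\varepsilon_k$, which is exactly the hypothesis under which Menshov's Lemma gives (3) with $\varepsilon=\varepsilon_k$; (ii) $r_k$ belongs to the progression furnished by the Proposition for the teeth of $J_k$; and (iii) $\mu(\text{teeth in }J_k)<(1/\nu)\mu(J_k)+\eta/n$. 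Summing (iii) over the $O(n)$ cells yields $\mu$ of the union of all teeth below $(1/\nu)\mu([0,2\pi])+2\eta$. Finally, as $n\to\infty$ through the admissible set the exceptional intervals shrink to the finite set of jumps of $\varphi$, so by non-atomicity $\mu$ of their union tends to $0$. Combining, $\mu([0,2\pi]\smallsetminus E)\le \mu(\text{exceptional cells})+\mu(\text{end intervals})+\mu(\text{teeth})$ (unions understood), whose right-hand side is at most $o(1)+\bigl((4/\nu)\mu([0,2\pi])+o(1)\bigr)+\bigl((1/\nu)\mu([0,2\pi])+2\eta\bigr)$, and this is below $(7/\nu)\mu([0,2\pi])$ once $\eta$ is small and $n$ large, as required.

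The one genuinely new obstacle --- and the only place where the present generality, as opposed to the classical setting, is really felt --- is precisely this passage from Lebesgue measure to $\mu$ for the exceptional set of Menshov's Lemma. For an arbitrary non-atomic $\mu$ there is no cell-by-cell bound: $\mu$ may put almost all of $\mu(J_k)$ into the tiny end intervals of a single cell, so $\mu(J_k\smallsetminus E_k)$ can be close to $\mu(J_k)$ while the Lebesgue measure of $J_k\smallsetminus E_k$ is only about $(5/\nu)|J_k|$. The way out is to work with equal cells and to keep Menshov's parameter $r$ free, so that both the end intervals and the teeth assemble into honest M-sets on which the equidistribution statement of the Proposition bites, and to spend the slack between $5/\nu$ and $7/\nu$ on the errors in those two limits and on the exceptional cells. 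Making these limits and error budgets fit together is where the bulk of the verification lies; properties (1)--(4) themselves are inherited mechanically from the one-cell Lemma because the $\psi_k$ have disjoint supports.
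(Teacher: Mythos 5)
Your proposal is correct and follows essentially the same route as the paper: equal-length cells, Menshov's Lemma applied cell by cell with the free parameter $r$ chosen large along the subsequence supplied by the Proposition (applied inside each $[a_k',b_k']$ with $m=\nu-4$), and a second, global application of the Proposition as the number of cells grows. Your upper bound on the union of the end intervals is just the complementary form of the paper's lower bound on $\mu\bigl(\bigcup_k[a_k',b_k']\bigr)$ via the M-set with $\tau=1-4/\nu$, and your exceptional-cell device around the jumps merely replaces the paper's reduction to a step function constant on equal-length intervals; both variants are sound.
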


To prove the claim we can assume that the step function $\varphi$ is constant on $\rho$ intervals of equal length. We further subdivide these intervals to take $\rho\kappa$ intervals of equal length. These are the $J_k$'s. $\kappa$ will be determined later. We apply Menshov's lemma to the triple  $J_k$, $\gamma_k$, $\varepsilon_k$ to get the function $\psi_k$ and a set $E_k\subset[a_k',b_k']\subset I_k$, where
\[a_k'=\hat c_k+2\frac{\hat d_k-\hat c_k}{\nu},\ b'=\hat d_k-2\frac{\hat d_k-\hat c_k}{\nu}.\]
Note that $[a_k',b_k']\smallsetminus E_k$ is an M-set in  $[a_k',b_k']$ with $\tau=\frac1\nu$, consisting of $(\nu-4)r$ intervals ($r$ is as in the statement of Menshov's lemma). Therefore by our Proposition, if   $r$ is large enough then 
\[\mu(E_k)\geq\left(1-\frac2\nu\right)\mu([a_k',b_k']).\]
Now we let 
\[E=\bigcup_{k}E_k.\]
Then 
\[\mu(E)\geq\left(1-\frac2\nu\right)\mu\left(\bigcup_k[a_k',b_k']\right)\]
$\bigcup_k[a'_k,b_k']$ is an M-set in $[0,2\pi]$ with $\tau=1-\frac4\nu$, consisting of $\rho\kappa$ intervals. So again by our Proposition, for $\kappa$ large enough we have
\[\mu\left(\bigcup_k[a_k',b_k']\right)\geq\left(1-\frac5\nu\right)\mu([0,2\pi]).\]
We conclude that
\[\mu(E)\geq\left(1-\frac7\nu\right)\mu\left([0,2\pi]\right).\]

\end{document}